\title{An almost existence theorem for non-contractible
         periodic orbits in cotangent bundles}
\author{
   Pedro A.S. Salom\~{a}o
      \\ IME USP
   \and
   Joa Weber
      \\ UNICAMP
   }
\date{18 April 2013}
\newtheorem{theoremABC}{Theorem}
\newtheorem{theorem}{Theorem}[section]
\newtheorem{corollary}[theorem]{Corollary}
\newtheorem{lemma}[theorem]{Lemma}
\newtheorem{proposition}[theorem]{Proposition}
\theoremstyle{definition}
\newtheorem{definition}[theorem]{Definition}
\newtheorem{example}[theorem]{Example}
\theoremstyle{remark}
\newcommand{\N}{{\mathbb{N}}}
\newcommand{\R}{{\mathbb{R}}}
\newcommand{\Z}{{\mathbb{Z}}}
\newcommand{\Hh}{{\mathcal{H}}}
\newcommand{\Ll}{{\mathcal{L}}}   % Lagrangian planes
\newcommand{\Pp}{{\mathcal{P}}}
\newcommand{\cHZ}{{\rm c_{HZ}}}       % Hofer-Zehnder capacity
\newcommand{\cBPS}{{\rm c_{BPS}}}       % BPS capacity
\newcommand{\eps}{{\varepsilon}}
\def\NABLA#1{{\mathop{\nabla\kern-.5ex\lower1ex\hbox{$#1$}}}}
\def\Nabla#1{\nabla\kern-.5ex{}_{#1}}
\def\Tabla#1{\Tilde\nabla\kern-.5ex{}_{#1}}
\def\abs#1{\mathopen|#1\mathclose|}
\def\Abs#1{\left|#1\right|}
\def\Norm#1{\left\|#1\right\|}
\renewcommand{\Tilde}{\widetilde}
\begin{document}

\maketitle

%%%%%%%%%%%%%%%%%%%%%%%%%%%%%%%%%%%%%%%
%%%%%%%%%%%%%%%% Abstract %%%%%%%%%%%%%%%%
%%%%%%%%%%%%%%%%%%%%%%%%%%%%%%%%%%%%%%%
\begin{abstract}
Assume $M$ is a closed connected smooth manifold
and $H:T^*M\to\R$ a smooth proper function bounded
from below. Suppose the sublevel set $\{H<d\}$
contains the zero section $M$
and $\alpha$ is a non-trivial homotopy class
of free loops in $M$. Then for almost every $s\in[d,\infty)$
the level set $\{H=s\}$ carries a periodic orbit
$z$ of the Hamiltonian system $(T^*M,\omega_0,H)$
representing $\alpha$.
Examples show that the condition $\{H<d\}\supset M$
is necessary and almost existence cannot be improved
to everywhere existence.
\end{abstract}

%\keywords{
                 {\small 2010
                 {\it Mathematics Subject Classification.}
                 37-06 (Primary),
         % 37-XX Dynamical systems and ergodic theory
         % 37-06 Proceedings
                 70H12 (Secondary)
         % 70-XX Mechanics of particles and systems
         % 70Hxx Mechanics of particles and systems
         % 70H12 Periodic and almost periodic solutions
                 }
%}

%\keywords{{\small {\bf Key words:}
%                 Hamiltonian dynamics }}

%\tableofcontents

%%%%%%%%%%%%%%%%%%%%%%%%%%%%%%%%%%%%%%%
%%%%%%%%%%%%%%%%%%%%%%%%%%%%%%%%%%%%%%%
%%%%%%%%%%%%%%%% Section %%%%%%%%%%%%%%%%%
%%%%%%%%%%%%%%%%%%%%%%%%%%%%%%%%%%%%%%%
%%%%%%%%%%%%%%%%%%%%%%%%%%%%%%%%%%%%%%%
\section{Introduction and main result}

Suppose $M$ is a smooth manifold and its cotangent
bundle $\pi:T^*M\to M$ is equipped with the canonical
symplectic structure $\omega_0=-d\theta$.
Here $\theta$ ($=pdq$) denotes the canonical Liouville
$1$-form on $T^*M$.
We view the elements of $T^*M$
as pairs $(q,p)$ where $q\in M$ and $p\in T_q^*M$.
Given any function $H$ on $T^*M$,
the identity $dH=\omega_0(X_H,\cdot)$
uniquely determines the Hamiltonian vector
field $X_H$ on $T^*M$. The integral curves
of $X_H$ are called (Hamiltonian) orbits. They preserve the
level sets of the total energy $H$. Of particular
interest are periodic orbits, namely orbits
$\gamma:\R\to T^*M$ such that
$\gamma(t+T)=\gamma(t)$ for some constant $T>0$
and all $t\in\R$. The
infimum\footnote{
    Here and thoughout we use the convention
    $\inf\emptyset=\infty$.
  }
over such $T$ is called the period of $\gamma$.
Given a family of energy levels, the question
arises which levels carry a periodic orbit.

Existence of a periodic orbit on a dense set of energy
levels was proved for $T^*\R^n$ by Hofer and
Zehnder~\cite{1987-HZ-InventMath} in 1987 and
for $T^*M$ by Hofer and Viterbo~\cite{1988-HV-ASNS}
in 1988.
The result for $T^*\R^n$ was extended
to existence almost everywhere by
Struwe~\cite{1990-Struwe-BullBMS} in 1990.
Existence of \emph{non-contractible} periodic orbits was
studied among others in 1997 by
Cieliebak~\cite{1997-Cieliebak-destroy} on
starshaped levels in $T^*M$, in 2000 by Gatien and
Lalonde~\cite{2000-GL-Duke} employing
Lagrangian submanifolds, and in 2003 by
Biran, Polterovich, and Salamon~\cite{2003-BPS-Duke}
on $T^*M$ for $M=\R^n/\Z^n$
or $M$ closed and negatively curved.
The dense existence theorem in~\cite{2003-BPS-Duke}
was generalized in 2006 to all closed Riemannian manifolds
in~\cite{2006-Joa-Duke}.
Theorem~\ref{thm:almex} below
%The main result of the present paper
is the corresponding almost existence theorem.
In contrast the almost existence theorem of Macarini and
Schlenk~\cite{2005-MacSchl-BLMS} requires finiteness
of the $\pi_1$-sensitive Hofer-Zehnder capacity.
An assumption that has been verified
to the best of our knowledge only for such
cotangent bundles which carry certain circle actions;
see~\cite{2004-Macarini-CCM ,
2011arXiv-Irie-noncontractible}.
For further references concerning dense
and almost existence results we refer
to~\cite{2005-Ginzburg-PM} and
concerning non-contractible orbits
to~\cite{2012arXiv-Gurel-noncontractible}.

\begin{theoremABC}[Almost existence]\label{thm:almex}
Assume $M$ is a closed connected smooth manifold and
$H:T^*M\to\R$ is a
proper\footnote{A map is called proper if
preimages of compact sets are compact.}
smooth function bounded from below. Suppose the
sublevel set
$\{H<d\}$ contains $M$. Then for every non-trivial
homotopy class $\alpha$ of free loops in $M$
the following is true.
For almost every $s\in[d,\infty)$ the level set
$\{H=s\}$ carries a periodic Hamiltonian
orbit $z$ that represents $\alpha$ in the sense that
$[\pi\circ z]=\alpha$ where $\pi:T^*M\to M$
is the projection map.
\end{theoremABC}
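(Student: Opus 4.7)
The plan is to upgrade the dense existence theorem for non-contractible periodic orbits of~\cite{2006-Joa-Duke} to almost existence by a Struwe-type monotonicity argument, thereby sidestepping the finiteness hypothesis on the $\pi_1$-sensitive Hofer-Zehnder capacity that the Macarini-Schlenk route~\cite{2005-MacSchl-BLMS} demands. Dense existence already gives, for every non-empty open subinterval $I \subset [d,\infty)$, at least one level in $I$ carrying a periodic orbit representing $\alpha$; the task is to improve ``at least one'' to ``almost every''.

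I would first associate to each $s > d$ a compactly supported smooth modification $H^s$ of $H$ that agrees with $H$ on $\{H \le s\}$, equals a large constant outside $\{H \le s + \epsilon\}$, and interpolates monotonically in between. Applying the Floer-theoretic machinery behind~\cite{2006-Joa-Duke}, which exploits the non-triviality of the homology of the connected component of $\alpha$ in the free loop space of $M$, produces a spectral selector $\sigma_\alpha(s) \in \R$ realized as the Hamiltonian action of a $1$-periodic orbit of $X_{H^s}$ in class $\alpha$. Since $H^{s_1} \le H^{s_2}$ pointwise whenever $s_1 \le s_2$, the standard monotonicity of Floer-theoretic spectral invariants yields that $\sigma_\alpha$ is non-decreasing in $s$, and finiteness of $\sigma_\alpha$ on $[d,\infty)$ follows from the dense existence theorem.

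By Lebesgue's theorem the monotone function $\sigma_\alpha$ is differentiable at almost every $s \in [d, \infty)$. At any such $s$ I would invoke the Struwe-Hofer-Zehnder dichotomy for the orbit $\gamma_s$ realizing $\sigma_\alpha(s)$: either the image of $\gamma_s$ already lies on the true level set $\{H = s\}$, in which case a reparametrization yields a periodic Hamiltonian orbit of $H$ on that level, or it lies in the interpolation collar $\{s < H < s + \epsilon\}$, in which case the action identity combined with the slope of the cut-off profile forces a lower bound on $\sigma_\alpha'(s)$ that blows up as $\epsilon \to 0$, contradicting the finite derivative. A diagonal compactness argument --- properness of $H$ prevents escape to infinity in the fibre direction, and openness and closedness of the $\alpha$-component of the free loop space preserve the homotopy class under the limit $\epsilon \to 0$ --- then produces the desired periodic $\alpha$-orbit of $H$ itself on $\{H=s\}$.

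The main obstacle is manufacturing the selector $\sigma_\alpha$ with all the required properties: finiteness and monotonicity in $s$, realization by the action of an orbit in class $\alpha$ on an appropriate level, and enough regularity to run the Struwe dichotomy, all while working in the non-contractible sector and on the non-compact phase space $T^*M$. The Floer theory for class $\alpha$ developed in~\cite{2006-Joa-Duke} supplies the raw tools, but certifying that the orbits captured by $\sigma_\alpha$ remain in class $\alpha$ and uniformly bounded in phase space across the limiting procedures is the delicate new content, and is precisely the point where the hypothesis $\{H < d\} \supset M$ is used to anchor the homotopy class through the zero section.
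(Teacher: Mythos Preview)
Your high-level architecture matches the paper's: define a monotone function of $s$, invoke Lebesgue's differentiation theorem, and run a Struwe--Hofer--Zehnder argument at points of Lipschitz continuity. But the paper does \emph{not} build a Floer spectral selector for modified Hamiltonians $H^s$; instead its monotone function is the BPS relative capacity $c_\alpha(s)=\cBPS(\{H<s\},M;\alpha)$. Finiteness of $c_\alpha$ comes not from dense existence but from the explicit computation $\cBPS(D_rT^*M,M;\alpha)=r\ell_\alpha$ (via~\cite{2006-Joa-Duke} plus a rescaling lemma) together with monotonicity, since each $\{H<s\}$ sits in some $D_rT^*M$.

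The substantive gap in your proposal is the dichotomy. With $H^s=H$ on $\{H\le s\}$, a $1$-periodic orbit of $X_{H^s}$ in class $\alpha$ can perfectly well lie in the \emph{interior} $\{H<s\}$ --- it is then a $1$-periodic orbit of $X_H$ on some lower level, with action independent of $s$ and of $\epsilon$. Your two alternatives (orbit on $\{H=s\}$ versus orbit in the collar) simply omit this case, and neither the blow-up of the collar slope nor the derivative bound on $\sigma_\alpha$ rules it out. The paper's mechanism for forcing the orbit into the collar is different and essential: by definition of $\cBPS$, for each $\tau>0$ one can choose a Hamiltonian $K$ supported in $\{H<s_0\}$ with $m_0(K)$ just above $-c_\alpha(s_0)$ and with $\Pp_1(K;\alpha)=\emptyset$; one then glues a steep cutoff $f$ across the collar to obtain $F$ with $m_0(F)<-c_\alpha(s_0+\tau)$, so $F$ must have an $\alpha$-orbit, and that orbit cannot enter $\{H<s_0\}$ precisely because $K$ has none there. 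This ``no-orbit Hamiltonian $K$'' trick is what pins the orbit to the collar and yields the uniform period bound $T_j\le 7L$ needed for the Arzel\`a--Ascoli limit onto $\{H=s_0\}$. Your spectral-selector route would need an analogous device, and as written it does not supply one. (A minor further point: with your construction one has $H^{s_1}\ge H^{s_2}$ for $s_1\le s_2$ outside $\{H\le s_1\}$, not $H^{s_1}\le H^{s_2}$, so the direction of monotonicity of $\sigma_\alpha$ needs care.)
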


\begin{proof}
There are three main ingredients in the proof.
The main player is the
Biran-Polterovich-Salamon (BPS)~\cite{2003-BPS-Duke}
capacity $\cBPS$ whose monotonicity axiom
Proposition~\ref{prop:monotonicity}
naturally leads to the monotone function
$c_\alpha:[d,\infty)\to[0,\infty]$
%defined by
\begin{equation}\label {eq:c_alpha}
     c_\alpha(s):=\cBPS(\{H<s\},M;\alpha).
\end{equation}

Secondly, the existence result~\cite[Thm.~A]{2006-Joa-Duke}
concerning periodic orbits enters as follows:
A priori the range of $\cBPS$ includes $\infty$
(by Definition~\ref{def:BPS} this is the case if no $1$-periodic
orbit representing $\alpha$ exists).
To prove finiteness of the function
$c_\alpha$ pick as an auxiliary quantity
a Riemannian metric $g$ on $M$.
Then using~\cite[Thm.~A]{2006-Joa-Duke} one readily
calculates that the BPS capacity of the open unit
disk cotangent
bundle relative to its zero section is equal to the smallest
length $\ell_\alpha$ among all closed geodesics representing
$\alpha$; see~\cite[Thm.~4.3]{2006-Joa-Duke}.
The rescaling argument in Lemma~\ref{le:rescaling}
shows that the capacity of the open radius $r$ disk
cotangent bundle $D_rT^*M$ is $r\ell_\alpha$.
Observe that $\{H\le s\}$ is compact since
$H$ is proper and bounded below.
Hence the set $\{H<s\}$ is bounded and therefore
contained in $D_rT^*M$ for some sufficiently
large radius $r=r(s)$. Thus $c_\alpha(s)\le r(s)\ell_\alpha$ by
the monotonicity axiom and this proves finiteness
of $c_\alpha$.

Thirdly, by Lebesgue's last theorem,
see e.g.~\cite{2002-Pugh-Analysis},
it is well known, yet amazing, that monotonicity
of the map $c_\alpha:[d,\infty)\to[0,\infty)$
implies differentiability, thus Lipschitz continuity,
at almost every point $s$ in the sense of measure theory.
Now the key input is Theorem~\ref{thm:carrries-orbit}
whose proof is by an analogue of the Hofer-Zehnder
method~\cite[Sec.~4.2]{1994-HZ-book} and which
detects for each such $s$
a periodic orbit on the corresponding level
set $\{H=s\}$.
\end{proof}

%%%%%%%%%%%%%%%% Subsection  %%%%%%%%%%%%%%
%\subsubsection*{Necessary condition}
\begin{example}[Necessary condition]
The condition
$\{H<d\}\supset M$ cannot be dropped
in Theorem~\ref{thm:almex}.
First of all, together with $H$ being proper
and bounded below, it guarantees that
each level set $\{H=s\}$ is actually nonempty whenever
$s\in[d,\infty)$.
Now consider a pendulum. It moves on $M=S^1=\R/\Z$
in a potential of the form $V(q)=1+\cos 2\pi q$;
see Figure~\ref{fig:fig-bead-potential}.
The Hamiltonian
$H:T^*M=S^1\times\R\to\R$
is given by $H(q,p)=\tfrac12 p^2+V(q)$; see
Figure~\ref{fig:fig-bead-phase} for the phase portrait.
\begin{figure}%[h]
\begin{minipage}[b]{.49\linewidth}
  \centering
  \includegraphics%[width=0.75\textwidth]
                             {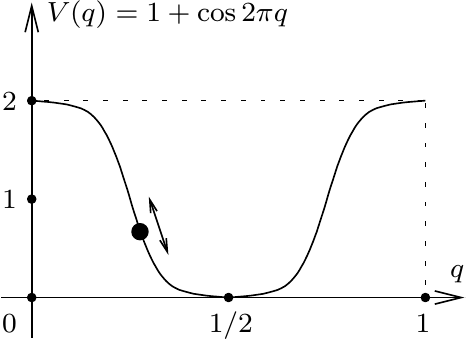}
  \caption{Potential energy $V$}
  \label{fig:fig-bead-potential}
\end{minipage}
\hfill
\begin{minipage}[b]{.49\linewidth}
  \centering
  \includegraphics%[width=0.75\textwidth]
                             {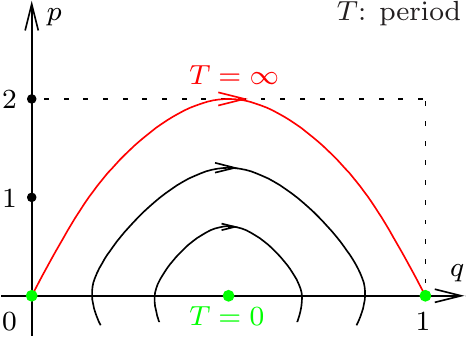}
  \caption{Pendulum phase portrait}
  \label{fig:fig-bead-phase}
\end{minipage}
\hfill
\end{figure}
     Energies below the
     maximum value $2$ of the potential $V$ do not allow
     for full rotations.
     For such low energies the pendulum can just
     swing hence and forth.
Observe that $\{H<1\}\not\supset M$.
On the other hand, for any energy
$s\in[1,2)$ the level set $\{H=s\}$
consists of a periodic orbit which is contractible
onto the stable (lower) equilibrium point
$(x,y)\equiv(\tfrac12,0)$. So none of these orbits
represents a homotopy  class $\alpha\not=0$.
(For $s>2$ the sets $\{H=s\}$ represent
classes $\alpha\not=0$.
The set $\{H=2\}$ consists of
the unstable (upper) equilibrium point and two homoclinic
orbits one of them indicated red in
Figure~\ref{fig:fig-bead-phase}.)
\end{example}

%%%%%%%%%%%%%%%% Subsection  %%%%%%%%%%%%%%
%\subsubsection*{Existence everywhere does not hold}
\begin{example}[Existence everywhere not true]
To see that \emph{almost} existence in
Theorem~\ref{thm:almex}
cannot be improved to \emph{everywhere} existence
consider the case $M=S^1$ and a
Hamiltonian $H:S^1\times \R\to\R$ of the form
$H(q,p)=h(p)$.
More precisely, pick a proper smooth function $h\ge 0$ with
$h(0)=0$ and $h(\pm 3)=2$ and where the points
$0,\pm 3$ are the only points of slope zero;
see Figure~\ref{fig:fig-Hamiltonian-h}.
\begin{figure}[b]
  \centering
  \includegraphics{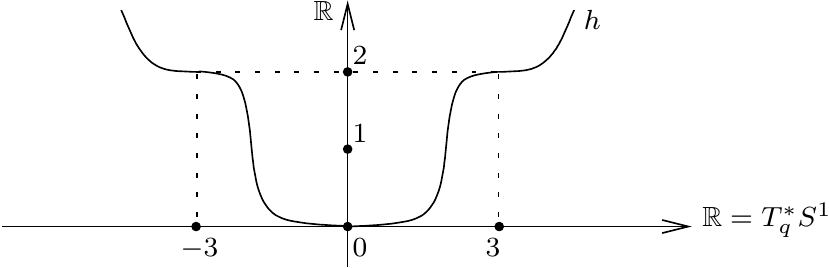}
  \caption{Hamiltonian $H=h(p)$ without
                 non-constant orbits on $\{H=2\}$
                 }
  \label{fig:fig-Hamiltonian-h}
\end{figure}
Then $\{H<1\}$ contains $M=S^1$.
Moreover, the whole level set $\{H=2\}$
consists of critical points of $H$. Therefore
on $\{H=2\}$ the Hamiltonian vector field $X_H$
vanishes identically and so all orbits are necessarily constant.

In contrast to this critical level counterexample
it should be interesting to find a regular
level of a smooth Hamiltonian $H$
as in Theorem~\ref{thm:almex}
without a periodic orbit in a given
homotopy class $\alpha\not=0$.
One possible way to achieve this is to start with an
energy level with finitely many periodic orbits
representing $\alpha$, then destroy them
using the symplectic plugs constructed
in~\cite{1997-Ginzburg-IMRN-CE}.

For general symplectic manifolds
existence may fail completely;
see~\cite{1987-Zehnder-remarks}
and~\cite{2011arXiv-Usher-infiniteHZ} for examples
of closed symplectic manifolds  admitting Hamiltonians
with no non-constant periodic orbits.
\end{example}

%%%%%%%%%%%%%%%%%%%%%%%%%%%%%%%%%%%%%%%
%%%%%%%%%%%%%%%%%%%%%%%%%%%%%%%%%%%%%%%
%%%%%%%%%%%%%%%% Section %%%%%%%%%%%%%%%%%
%%%%%%%%%%%%%%%%%%%%%%%%%%%%%%%%%%%%%%%
%%%%%%%%%%%%%%%%%%%%%%%%%%%%%%%%%%%%%%%
\section{Symplectic capacities} 

To fix notation
consider $\R^{2n}$ with coordinates
$(x_1,\ldots,x_n,y_1,\ldots,y_n)$ and symplectic form
$\omega_0 =\sum_{i=1}^n dx_i \wedge dy_i.$
Associate to each symplectic manifold $(N,\omega)$, of fixed dimension
$2n>0$ and possibly with boundary, a number $c(N,\omega) \in [0,\infty]$  that satisfies the axioms:
\begin{itemize}
\item {\bf Monotonicity:}
$c(N_1,\omega_1) \leq c(N_2,\omega)$ whenever there is
a symplectic embedding $\psi: (N_1,\omega_1) \to (N_2,\omega_2)$.
\item {\bf Conformality:} $c(N,\lambda \omega) =
\abs{\lambda}\, c(N,\omega), \forall \lambda \in \R \setminus \{0\}$.
\item {\bf Non-Triviality:} $c(B(1),\omega_0) = c(Z(1),\omega_0)=\pi$.
\end{itemize}
Here $B(r) = \{(x,y) \in \R^{2n}: |x|^2 +|y|^2< r^2\}$ is the
ball of radius $r>0$ and $Z(r)= \{(x,y) \in \R^{2n}: x_1^2 + y_1^2<r^2\}$ is the symplectic cylinder of radius $r>0$. On $(\R^{2n},\omega_0)$, one
checks  the following re-scaling property:
$$\text{$U\subset \R^{2n}$  open}\quad \Rightarrow\quad c(\lambda U, \omega_0) = \lambda^2 c(U,\omega_0),\forall \lambda \in \R \setminus\{0\}.$$
A map $(N,\omega) \mapsto c(N,\omega)$ satisfying the three axioms above is called a symplectic capacity.
Gromov introduced this notion in~\cite{1985-Gromov-Inventiones} and showed that $$c_0(N,\omega) := \sup\left\{\pi r^2: \exists \mbox{ symplectic embedding }\psi:(B(r),\omega_0) \to (N,\omega)\right\}$$ is a symplectic capacity, called Gromov's width. It satisfies $c_0(N,\omega) \leq c(N,\omega)$ for any other symplectic capacity $c$. One of the consequences of the existence of a symplectic capacity is the non-squeezing theorem which asserts that $$\exists \mbox { symplectic embedding }\psi: (B(r), \omega_0) \to (Z(R),\omega_0)\quad\Leftrightarrow\quad r\leq R.$$

%%%%%%%%%%%%%%%%%%%%%%%%%%%%%%%%%%%%%%%
%%%%%%%%%%%%%%%% Subsection  %%%%%%%%%%%%%%
%%%%%%%%%%%%%%%%%%%%%%%%%%%%%%%%%%%%%%%
\subsection{Hofer-Zehnder capacity}

Hofer and Zehnder introduced in \cite{1994-HZ-book} a symplectic
capacity defined in terms of the Hamiltonian dynamics on the
underlying symplectic manifold $(N,\omega)$. Recall that a smooth
function $H:N \to \R$ determines the Hamiltonian vector field $X_H$ by
$i_{X_H} \omega = dH$. We say that a periodic orbit of $\dot x = X_H
(x)$ is fast if its period is $<1$. A function $H:N\to\R$ is
called admissible if it admits a maximum and the following conditions hold:

\begin{itemize}
\item $0 \leq H\leq \max H<\infty$.
\item $\exists K \subset N\setminus \partial N$ compact, such that $H|_{N\setminus K} = \max H$.
\item $\exists U \subset N$ open and non-empty, such that $H|_U =0$.
\item $\dot x = X_H \circ x$ admits no non-constant fast periodic orbits.
\end{itemize}
The set of admissible Hamiltonians is denoted by
$\mathcal{H}_a(N,\omega)$. Let 
$$\cHZ(N,\omega) := \sup \{\max H \mid H \in \mathcal{H}_a(N,\omega)\}.$$

\begin{theorem}[Hofer-Zehnder]\label{theo_hz}$\cHZ$ is a symplectic capacity. \end{theorem}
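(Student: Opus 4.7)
The plan is to verify the three capacity axioms -- monotonicity, conformality, and non-triviality -- in order, with essentially all the substance concentrated in the last.

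For monotonicity, given a symplectic embedding $\psi:(N_1,\omega_1)\to(N_2,\omega_2)$ and an admissible $H_1\in\Hh_a(N_1,\omega_1)$ with $H_1\equiv\max H_1$ off a compact set $K_1\subset N_1\setminus\partial N_1$, I would transport $H_1$ by setting $H_2:=H_1\circ\psi^{-1}$ on $\psi(N_1)$ and $H_2\equiv\max H_1$ on $N_2\setminus\psi(K_1)$. This function is smooth (the two definitions agree and are locally constant on the overlap near $\partial\psi(K_1)$), has the same maximum as $H_1$, vanishes on the open set $\psi(U)$, and is admissible: $\psi$ conjugates the two Hamiltonian flows on $\psi(N_1)$, while $X_{H_2}\equiv 0$ on the complement, so no non-constant fast periodic orbit can appear. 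Taking suprema yields $\cHZ(N_1,\omega_1)\le\cHZ(N_2,\omega_2)$. Conformality is an elementary rescaling: the identity $X_{|\lambda|H}^{\lambda\omega}=\pm X_H^\omega$ shows that $H\mapsto|\lambda|H$ is a bijection $\Hh_a(N,\omega)\to\Hh_a(N,\lambda\omega)$ (time reversal for $\lambda<0$ preserves periodic orbits with the same periods) that multiplies the maximum by $|\lambda|$, whence $\cHZ(N,\lambda\omega)=|\lambda|\cHZ(N,\omega)$.

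Non-triviality splits, via $B(1)\subset Z(1)$ and monotonicity, into the two bounds $\cHZ(B(1),\omega_0)\ge\pi$ and $\cHZ(Z(1),\omega_0)\le\pi$. For the lower bound I would, for each $\eps>0$, construct a radial admissible Hamiltonian $H(z)=f(|z|^2)$ on $B(1)$ with $f:[0,1)\to[0,\pi-\eps]$ smooth, nondecreasing, vanishing near $0$, equal to $\pi-\eps$ near $1$, and satisfying $0\le f'<\pi$ throughout. A direct computation gives $X_H=2f'(|z|^2)\sum_i(y_i\p_{x_i}-x_i\p_{y_i})$; on each level $\{|z|^2=r\}$ this is a uniform rotation with period $\pi/f'(r)>1$, so $H$ is admissible and $\max H=\pi-\eps$.

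The main obstacle is the upper bound $\cHZ(Z(1),\omega_0)\le\pi$, which is essentially a quantitative form of non-squeezing. I would follow the variational argument of~\cite[Chap.~3]{1994-HZ-book}: given any compactly supported $H\in\Hh_a(Z(1),\omega_0)$ with $\max H>\pi$, extend $H$ to all of $\R^{2n}$ so that outside a large ball it agrees with a carefully chosen quadratic form of slope strictly less than $\pi$. The modified symplectic action functional $\Aa(x)=\tfrac12\int_0^1 x^*\theta_0-\int_0^1 H(x)\,dt$ on an appropriate Hilbert completion of the loop space of $\R^{2n}$ is shown to satisfy the Palais-Smale condition, and a linking argument -- playing the constant loops off against a finite-dimensional sphere in the negative spectral subspace of the Hessian of the unperturbed action at zero -- produces a nontrivial critical point. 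Exploiting $\max H>\pi$ together with the specific geometry of the quadratic extension forces the resulting periodic orbit to have period strictly less than $1$, contradicting admissibility. This is where genuine symplectic rigidity (and all the analysis) enters.
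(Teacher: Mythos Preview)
The paper does not actually prove this theorem: it is stated as a known result, followed only by the one-line remark that ``the hard part of proving Theorem~\ref{theo_hz} is to show that $\cHZ$ satisfies the non-triviality axiom.'' Your outline is a faithful sketch of the standard Hofer--Zehnder proof from~\cite{1994-HZ-book}, and in particular your allocation of effort---monotonicity and conformality essentially formal, the lower bound on $\cHZ(B(1),\omega_0)$ via explicit radial Hamiltonians, and the upper bound on $\cHZ(Z(1),\omega_0)$ via the variational/linking argument on the loop space---matches exactly what the paper's remark is pointing to.
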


We should remark that the hard part of proving Theorem \ref{theo_hz} is to show that $\cHZ$ satisfies the non-triviality axiom.

%%%%%%%%%%%%%%%%%%%%%%%%%%%%%%%%%%%%%%%
%%%%%%%%%%%%%%%% Subsection  %%%%%%%%%%%%%%
%%%%%%%%%%%%%%%%%%%%%%%%%%%%%%%%%%%%%%%
\subsection{BPS relative capacity}

Fix a closed manifold $M$.
The components $\Ll_\alpha M$
of the free loop space
$\Ll M:=C^\infty(S^1,M)$
are labelled by the elements
$\alpha=[\gamma]$ of the set $\tilde\pi_1(M)$
of homotopy classes of free loops $\gamma$ in $M$.
Here and throughout we identify $S^1$ with $\R/\Z$
and think of $\gamma$ as a smooth map
$\gamma:\R\to M$ that satisfies
$\gamma(t+1)=\gamma(t)$ for every $t\in\R$.
A function $H\in C^\infty_0(S^1\times T^*M)$
determines a $1$-periodic family of compactly supported
vector fields $X_{H_t}$ on $T^*M$ by
$dH_t=\omega_0(X_{H_t},\cdot)$. Let
$$
     \Pp_1(H;\alpha)
     :=\{z:S^1\to T^*M\mid
     \dot z(t)=X_{H_t}(z(t))\,\forall t\in S^1,[\pi\circ z]=\alpha
     \}
$$
be the set of $1$-periodic orbits of $X_{H_t}$ whose
projections to $M$ represent $\alpha$.

\begin{definition}\label{def:BPS}
Following~\cite{2003-BPS-Duke} assume
$W\subset T^*M$ is an open subset which contains
the zero section $M$.
For any constant $b>0$ consider the set
$$
     \Hh_b(W)
     :=\left\{ H\in C^\infty_0(S^1\times W)\,\Big|\,
     m_0(H):=\max_{S^1\times M} H\le -b
     \right\}.
$$
The BPS capacity of $W$ relative $M$ and with respect
to $\alpha\in\tilde\pi_1(M)$ is given by
\begin{equation}\label{eq:BPS-capacity}
     \cBPS(W,M;\alpha)
     :=\inf\left\{b>0\mid
     \text{$\Pp_1(H;\alpha)\not=\emptyset$
     for every $H\in\Hh_b(W)$}
     \right\}.
\end{equation}
\end{definition}
Note that $\cBPS$ takes values in $[0,\infty]$
since we use the convention $\inf\emptyset=\infty$.
Furthermore, the BPS capacity 
%is not a symplectic capacity, it should rather be called
is a relative symplectic capacity.

\begin{proposition}
[Monotonicity{\cite[Prop.~3.3.1]{2003-BPS-Duke}}]
\label{prop:monotonicity}
If $W_1\subset W_2\subset T^*M$ are open subsets
containing $M$
and $\alpha\in\tilde \pi_1(M)$, then
$
     \cBPS(W_1,M;\alpha)\le \cBPS(W_2,M;\alpha)
$.
\end{proposition}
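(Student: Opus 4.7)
My plan is to use extension by zero. Given any $H\in\Hh_b(W_1)$, I set $\tilde H:=H$ on $S^1\times W_1$ and $\tilde H:=0$ on $S^1\times(T^*M\setminus\supp H)$. Since $H$ has compact support in $S^1\times W_1$, the two definitions agree on the overlap and produce a smooth compactly supported function $\tilde H\in C_0^\infty(S^1\times W_2)$. Because $M\subset W_1$, I have $\tilde H|_{S^1\times M}=H|_{S^1\times M}$, so $m_0(\tilde H)=m_0(H)\le-b$ and therefore $\tilde H\in\Hh_b(W_2)$. This yields an inclusion $\Hh_b(W_1)\hookrightarrow\Hh_b(W_2)$.

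Next I would match up the $1$-periodic orbits. Since $X_{\tilde H}$ vanishes identically outside $\supp H\subset W_1$, any $1$-periodic orbit of $X_{\tilde H}$ is either entirely constant or, by uniqueness of ODE solutions for $\dot z=X_{\tilde H}(z)$, stays inside $\supp H$ for all time and hence coincides with a $1$-periodic orbit of $X_H$ living in $W_1$. If $\alpha$ is non-trivial, a constant orbit projects to a constant loop which does not represent $\alpha$, so $\Pp_1(\tilde H;\alpha)=\Pp_1(H;\alpha)$ under this identification. If $\alpha$ is the trivial class, both capacities vanish because constant orbits automatically represent $\alpha$, and the monotonicity inequality $0\le 0$ is trivial.

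To conclude, I fix an arbitrary $b>\cBPS(W_2,M;\alpha)$. By definition of the infimum, every $H'\in\Hh_b(W_2)$ admits a $1$-periodic orbit representing $\alpha$. Applying this to $H'=\tilde H$ for each $H\in\Hh_b(W_1)$ and transferring the resulting orbit back to $W_1$ as in the previous paragraph shows $\Pp_1(H;\alpha)\ne\emptyset$. Hence $b$ lies in the defining set for $\cBPS(W_1,M;\alpha)$, and letting $b\downarrow\cBPS(W_2,M;\alpha)$ yields the claimed inequality. The only point demanding any care is the orbit correspondence — verifying that non-constant periodic orbits of the extension remain in $W_1$ — but this follows from elementary uniqueness of flows rather than from any serious analytic input.
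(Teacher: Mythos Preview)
Your argument is correct and is exactly the standard proof; the paper itself does not supply a proof of this proposition but merely cites it from~\cite{2003-BPS-Duke}. One minor remark: your second paragraph works harder than necessary. Since $H\in C_0^\infty(S^1\times W_1)$ is already, by definition, a smooth function on $S^1\times T^*M$ with (spatial) support in $W_1\subset W_2$, the ``extension'' $\tilde H$ is literally the same function on $S^1\times T^*M$, just regarded as an element of $C_0^\infty(S^1\times W_2)$. The set $\Pp_1(H;\alpha)$ is defined in terms of $X_{H_t}$ on all of $T^*M$, so $\Pp_1(\tilde H;\alpha)=\Pp_1(H;\alpha)$ tautologically, without any appeal to uniqueness of ODE solutions or a separate treatment of the trivial class. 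The inclusion $\Hh_b(W_1)\subset\Hh_b(W_2)$ together with this identity immediately gives your final paragraph.
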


%%%%%%%%%%%%%%%% Subsubsection %%%%%%%%%%%%
%\subsubsection*{Rescaling}

Fix a Riemannian metric on $M$
and constants $r,b>0$.
Let $DT^*M$ be the open unit disk cotangent
bundle and $D_rT^*M$ the one of radius $r$. Observe that
\begin{equation}\label{eq:Hh_c}
     H\in\Hh_b(DT^*M)
     \quad\Longleftrightarrow\quad
     H_r\in\Hh_{rb}(D_rT^*M)
\end{equation}
whenever the Hamiltonians $H$ and $H_r$ are related by
$
     H_r(t,q,p)=r\cdot H(t,q,\frac{p}{r})
$.
In addition, pick $\alpha\in\tilde\pi_1(M)$.
Then there is the crucial bijection
\begin{equation}\label{eq:Pp_T}
\begin{split}
     \Pp_1(H;\alpha)
   \to \Pp_1(H_r;\alpha)
   :
     (x,y)
   \mapsto (x,ry)
\end{split}
\end{equation}
asserting that the $1$-periodic orbits of $H$
correspond naturally with those of $H_r$.

\begin{lemma}[Rescaling]\label{le:rescaling}
$
     \cBPS(D_rT^*M,M;\alpha)
     =r\cdot \cBPS(DT^*M,M;\alpha)
$.
\end{lemma}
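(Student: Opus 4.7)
The plan is to reduce Lemma~\ref{le:rescaling} directly to the two bijections~\eqref{eq:Hh_c} and~\eqref{eq:Pp_T} already in place in the excerpt. Together they tell us that the assignment $H\mapsto H_r$, with $H_r(t,q,p)=rH(t,q,p/r)$, is a bijection $\Hh_b(DT^*M)\to\Hh_{rb}(D_rT^*M)$, and that under this correspondence $\Pp_1(H;\alpha)$ is empty precisely when $\Pp_1(H_r;\alpha)$ is.

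With these two facts in hand the lemma becomes a bookkeeping argument around Definition~\ref{def:BPS}. I would introduce
\begin{equation*}
     B := \{b > 0 : \Pp_1(H;\alpha) \neq \emptyset \text{ for every } H \in \Hh_b(DT^*M)\}
\end{equation*}
and the analogous set $C$ with $D_rT^*M$ replacing $DT^*M$. The two bijections combine to say precisely that $c\in C$ iff $c/r\in B$, that is $C=rB$. Taking infima (with the usual convention $\inf\emptyset=\infty$, under which both sides are simultaneously infinite) then yields
\begin{equation*}
     \cBPS(D_rT^*M,M;\alpha) = \inf C = r\inf B = r\cdot \cBPS(DT^*M,M;\alpha),
\end{equation*}
which is the claim.

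The only step that is not a formal manipulation is the verification of~\eqref{eq:Hh_c} and~\eqref{eq:Pp_T}, which is presumably why the excerpt records them as premises. I expect~\eqref{eq:Hh_c} to follow immediately from the definition of $\Hh_b$: the fibrewise dilation $(q,p)\mapsto(q,rp)$ carries $DT^*M$ onto $D_rT^*M$, fixes the zero section pointwise, and multiplies $m_0(H)$ by $r$, so the admissibility bound $m_0(H)\le-b$ becomes $m_0(H_r)\le-rb$. For~\eqref{eq:Pp_T} the main observation is that the same dilation $\Phi_r$ pulls $\theta$ back to $r\theta$ while satisfying $H_r\circ\Phi_r=rH$; the two factors of $r$ cancel in the defining equation $i_{X}\omega_0=dH$, so $X_H$ and $X_{H_r}$ are $\Phi_r$-related and periodic orbits correspond via $(x,y)\mapsto(x,ry)$. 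The projection to $M$ is unaffected by the fibre dilation, so the homotopy class $\alpha$ is automatically preserved. I do not foresee any serious obstacle here—everything is a direct consequence of the scaling behaviour of the canonical one-form.
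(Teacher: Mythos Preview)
Your argument is correct and follows the same route as the paper: both proofs unwind Definition~\ref{def:BPS} and invoke the bijections~\eqref{eq:Hh_c} and~\eqref{eq:Pp_T} to match the index sets $B$ and $C=rB$, then take infima. Your additional sketch of why \eqref{eq:Hh_c} and \eqref{eq:Pp_T} hold goes slightly beyond what the paper writes out, but is accurate.
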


\begin{proof}
By definition~(\ref{eq:BPS-capacity})
of the BPS capacity
%and by~(\ref{eq:Hh_c}) and~(\ref{eq:Pp_T})
we obtain that
\begin{equation*}
\begin{split}
     &\cBPS(D_rT^*M,M;\alpha)
   \\
     &=\inf\left\{rb>0\mid
     \text{$\Pp_1(H_r;\alpha)\not=\emptyset$
     for every $H_r\in\Hh_{rb}(D_rT^*M)$}
     \right\}
   \\
    &=r\cdot \inf\left\{b>0\mid
     \text{$\Pp_1(H;\alpha)\not=\emptyset$
     for every $H\in\Hh_b(DT^*M)$}
     \right\}
   \\
    &=r\cdot \cBPS(DT^*M,M;\alpha)
\end{split}
\end{equation*}
where the second step
uses~(\ref{eq:Hh_c}) and~(\ref{eq:Pp_T}).
\end{proof}

\begin{corollary}\label{cor:BPS-radius-r}
$
     \cBPS(D_rT^*M,M;\alpha)
     =r\ell_\alpha
$
where $\ell_\alpha$ is the smallest length among all
closed curves representing $\alpha$.
\end{corollary}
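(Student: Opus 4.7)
The plan is to reduce to the case $r=1$ by the Rescaling Lemma just proved, after which the identity to establish becomes $\cBPS(DT^*M, M; \alpha) = \ell_\alpha$. This is precisely \cite[Thm.~4.3]{2006-Joa-Duke}, so strictly speaking the corollary is a formal consequence of the Rescaling Lemma and that cited result; nonetheless it is worth indicating how one would prove each direction of the identity.

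For the upper bound $\cBPS(DT^*M, M; \alpha) \le \ell_\alpha$, fix $b > \ell_\alpha$ and any $H \in \Hh_b(DT^*M)$. The existence theorem \cite[Thm.~A]{2006-Joa-Duke} applies and yields a $1$-periodic orbit of $X_{H_t}$ whose projection to $M$ represents $\alpha$: the argument rests on a Floer-theoretic comparison of the Hamiltonian action on the free loop space of $T^*M$ with the energy functional on $\Ll_\alpha M$, with the hypothesis $\max_{S^1\times M} H \le -b < -\ell_\alpha$ ensuring that the relevant action window captures a closed geodesic in class $\alpha$. Taking the infimum over $b > \ell_\alpha$ gives the upper bound.

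For the lower bound $\cBPS(DT^*M, M; \alpha) \ge \ell_\alpha$, fix $b < \ell_\alpha$ and exhibit an autonomous admissible Hamiltonian $H \in \Hh_b(DT^*M)$ with $\Pp_1(H;\alpha) = \emptyset$. The standard model is $H(q,p) = f(|p|_g)$, suitably smoothed at the zero section, where $g$ is the Riemannian metric inducing $DT^*M$ and $f$ is compactly supported in $[0,1)$ with $f(0) \le -b$ and $\sup |f'| < \ell_\alpha$. The Hamiltonian flow on each nonzero energy level is a reparametrization of the geodesic flow of $g$, so any $1$-periodic orbit projecting to a non-constant loop in $M$ must be a closed geodesic of length $f'(|p|_g) < \ell_\alpha$; by definition of $\ell_\alpha$ no such geodesic represents $\alpha$. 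Hence $\cBPS(DT^*M, M; \alpha) \ge b$ for every $b < \ell_\alpha$.

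The main obstacle is the upper bound, which genuinely depends on the Floer-theoretic existence theorem \cite[Thm.~A]{2006-Joa-Duke}; the Rescaling Lemma reduction and the model-Hamiltonian lower bound are both routine once the respective framework is in hand.
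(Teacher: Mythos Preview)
Your proof is correct and matches the paper's own argument exactly: the paper simply cites \cite[Thm.~4.3]{2006-Joa-Duke} for the case $r=1$ and then invokes the Rescaling Lemma. Your additional sketch of the two inequalities behind the cited theorem is extra commentary not present in the paper's proof, but it is accurate and does no harm.
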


\begin{proof}
$
     \cBPS(DT^*M,M;\alpha)
     =\ell_\alpha
$
by~\cite[Thm.~4.3]{2006-Joa-Duke}.
Apply Lemma~\ref{le:rescaling}.
\end{proof}

%%%%%%%%%%%%%%%%%%%%%%%%%%%%%%%%%%%%%%%
%%%%%%%%%%%%%%%%%%%%%%%%%%%%%%%%%%%%%%%
%%%%%%%%%%%%%%%% Section %%%%%%%%%%%%%%%%%
%%%%%%%%%%%%%%%%%%%%%%%%%%%%%%%%%%%%%%%
%%%%%%%%%%%%%%%%%%%%%%%%%%%%%%%%%%%%%%%
\section{The Hofer-Zehnder method}

Assume the Hamiltonian $H:T^*M\to\R$ is smooth,
proper, and bounded from below and a sublevel set
$\{H<d\}$ contains $M$. Fix a non-trivial homotopy
class $\alpha$ of free loops in $M$.
Consider the monotone function $c_\alpha$ defined
on the interval $[d,\infty)$ by~(\ref{eq:c_alpha}).
By Lebesgue's last theorem, see
e.g.~\cite[p.~401]{2002-Pugh-Analysis},
the function $c_\alpha$ is differentiable at almost every
point in the sense of measure theory.

\begin{theorem}\label{thm:carrries-orbit}
Assume $s_0\in[d,\infty)$ is a regular value of $H$ and
$c_\alpha$ is Lipschitz continuous at $s_0$.
Then the hypersurface $S=H^{-1}(s_0)$ carries a
periodic orbit $z_T$ of $X_H$ that represents
$\alpha$ and where $T>0$ indicates the period.
\end{theorem}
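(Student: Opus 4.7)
The plan is to argue by contradiction: assume $S=H^{-1}(s_0)$ carries no periodic orbit of $X_H$ representing $\alpha$, and show this forces $c_\alpha$ to fail Lipschitz continuity at $s_0$. The key ingredient will be a composite Hamiltonian $F=G-\phi(H)$ on $\{H<s_0+\delta\}$, built from a Hamiltonian $G$ nearly achieving the BPS capacity $c_\alpha(s_0-\delta)$ together with an autonomous bump $\phi\circ H$ supported in the tube $\{|H-s_0|<\delta\}$, whose $1$-periodic orbits will decouple into those of $G$ and reparametrized closed $X_H$-orbits in the tube.

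My first step is a strengthened period lower bound. Since $s_0$ is regular and $H$ is proper, $S$ is compact and sits in a compact tubular neighborhood $H^{-1}[s_0-\delta_0,s_0+\delta_0]$ on which $|X_H|\leq C$ for some $C>0$ in a fixed Riemannian metric. Any closed $X_H$-orbit of period $T$ in this tube representing $\alpha$ projects to a loop in $M$ of length in $[\ell_\alpha,CT]$ with $\ell_\alpha>0$, giving $T\geq \ell_\alpha/C$. An Arzela--Ascoli limiting argument then upgrades this: under the contradiction hypothesis, for every $T^*>0$ there exists $\delta_1(T^*)>0$ such that no level $H^{-1}(s)$ with $|s-s_0|\leq \delta_1(T^*)$ carries an $\alpha$-orbit of period $\leq T^*$, since otherwise a subsequence of such orbits (with periods in $[\ell_\alpha/C,T^*]$) would converge to a non-constant $\alpha$-orbit on $S$.

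Fix $T^*>0$ and $\delta\in(0,\delta_1(T^*))$. Choose $b<c_\alpha(s_0-\delta)$ and, by definition of the BPS capacity, pick $G\in\Hh_b(\{H<s_0-\delta\})$ with $\Pp_1(G;\alpha)=\emptyset$. Select a smooth non-increasing bump $\phi\colon\R\to[0,A]$ with $\phi\equiv A$ on $(-\infty,s_0-\delta]$, $\phi\equiv 0$ on $[s_0+\delta,\infty)$, and $\max|\phi'|<T^*$, which permits $A$ arbitrarily close to $2T^*\delta$. Setting $F(t,x):=G(t,x)-\phi(H(x))$ yields a compactly supported Hamiltonian in $\{H<s_0+\delta+\mu\}$ for any $\mu>0$, satisfying $F|_{S^1\times M}\leq -(b+A)$ because $M\subset\{H<s_0-\delta\}$ sits inside the region where $\phi\equiv A$. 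The crucial dynamical observation is that the spatial support of $G$ lies inside $\{H<s_0-\delta\}$, where $\phi'(H)\equiv 0$, while $\phi'(H)$ is supported in the tube $\{s_0-\delta<H<s_0+\delta\}$, where $G\equiv 0$; the two sets are disjoint and separated by a region on which $X_F\equiv 0$. Consequently, every $1$-periodic orbit of $X_F$ lies in exactly one of three regions: the support of $G$, where it is a $1$-periodic $X_G$-orbit and hence not of class $\alpha$ by choice of $G$; the tube, where it is a reparametrized closed $X_H$-orbit of period $\leq \max|\phi'|<T^*$ and hence not of class $\alpha$ by the period bound above; or the complement, where it is constant. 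Thus $F\in\Hh_{b+A}(\{H<s_0+\delta+\mu\})$ with $\Pp_1(F;\alpha)=\emptyset$, yielding $c_\alpha(s_0+\delta+\mu)\geq b+A$.

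Letting $b\uparrow c_\alpha(s_0-\delta)$, $A\uparrow 2T^*\delta$, and $\mu\downarrow 0$ (handled by monotonicity of $c_\alpha$ plus a slight shrinkage of $\delta$) yields $c_\alpha(s_0+\delta)-c_\alpha(s_0-\delta)\geq 2T^*\delta$. Combined with the Lipschitz bound $c_\alpha(s_0+\delta)-c_\alpha(s_0-\delta)\leq 2L\delta$ at $s_0$, this forces $T^*\leq L$ for every $T^*>0$, a contradiction. The main obstacle I anticipate is verifying the decoupling of $1$-periodic $X_F$-orbits rigorously: the argument rests on $X_F\equiv 0$ between the spatial support of $G$ and the tube preventing an orbit from switching regimes, and for time-dependent $G$ one must additionally rule out hybrid orbits that follow $X_G$ for part of the period and then rest at a critical point of $G_t$ on the boundary of $\supp(G)$, which requires either a careful choice of $G$ (for instance, ensuring its $1$-periodic orbits lie well inside $\supp(G)$) or a small approximation argument.
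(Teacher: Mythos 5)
Your proposal is correct in substance and uses the same three ingredients as the paper, but it assembles them contrapositively, so a comparison is worthwhile. The paper argues directly: for each small $\tau$ it picks a near-optimal $K\in C^\infty_0(S^1\times\{H<s_0\})$ with $\Pp_1(K;\alpha)=\emptyset$ and $m_0(K)\le -(c(0)-L\tau)$, extends it across the shell $\{s_0\le H<s_0+\tau\}$ by a monotone function of $H$ with slope $\le 7L$ --- structurally the same composite as your $F=G-\phi(H)$, namely $F=K+f(H-s_0)$ --- and then uses the capacity in the direction opposite to yours: $m_0(F)<-c(\tau)$ \emph{forces} $\Pp_1(F;\alpha)\neq\emptyset$, the orbit is trapped on a level $S_\eps$ as a reparametrized $X_H$-orbit of period $f'(\eps)\le 7L$, and Arzel\`a--Ascoli along $\tau_j\to0$ produces the orbit on $S_0$. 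You instead assume no $\alpha$-orbit on $S_0$, promote the Arzel\`a--Ascoli step to a standalone lemma (no $\alpha$-orbits of period $\le T^*$ on nearby levels), which is exactly what lets you take the shell slope \emph{arbitrarily} large ($\max|\phi'|<T^*$ free, versus the paper's fixed $7L$), conclude $\Pp_1(F;\alpha)=\emptyset$, and hence the growth $c_\alpha(s_0+\delta)-c_\alpha(s_0-\delta)\ge 2T^*\delta$, contradicting Lipschitz continuity once $T^*>L$. The trade-off: the paper's version actually exhibits the orbit as a limit, while yours isolates the cleaner quantitative statement that in the absence of an $\alpha$-orbit on $S$ every difference quotient of $c_\alpha$ at $s_0$ exceeds every $T^*$, i.e.\ the derivative blows up --- the classical Hofer--Zehnder-style formulation.

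Three repairable points. First, the ``main obstacle'' you flag is not there: on all of $\{H<s_0-\delta\}$, not merely on $\supp G$, one has $X_F=X_G$, so any $1$-periodic $X_F$-orbit contained in $\{H<s_0-\delta\}$ is verbatim a $1$-periodic $X_G$-orbit, hybrid or not, and is excluded by $\Pp_1(G;\alpha)=\emptyset$; an orbit cannot switch regimes because $H\circ z$ is continuous, hence would have to pass through the buffer $\{\max_{\supp G}H<H<s_0-\delta\}$, where $F\equiv -A$ is locally constant, $X_F\equiv 0$ for all $t$, and uniqueness of ODE solutions forces the orbit to be constant. Second, your two-sided shell genuinely fails at the endpoint $s_0=d$, which the theorem permits: there $\{H<s_0-\delta\}$ need not contain $M$, so $\Hh_b(\{H<s_0-\delta\})$ and $c_\alpha(s_0-\delta)$ are undefined. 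This is fixed by the one-sided variant implicit in the paper: take $G\in\Hh_b(\{H<s_0\})$ and let $\phi$ drop only on $[s_0,s_0+\delta]$; since $\supp G$ is compact in $\{H<s_0\}$ the buffer survives, and one contradicts the one-sided Lipschitz bound $c_\alpha(s_0+\delta)-c_\alpha(s_0)\le L\delta$. Third, in your compactness lemma you should record, as the paper does in its Step III, that the $C^0$-limit still represents $\alpha$ (the injectivity-radius homotopy $h_\lambda(t)=\exp_{x(t)}\lambda\xi(t)$); your period lower bound $T\ge\ell_\alpha/C$ then guarantees the limit is a genuine non-constant periodic orbit. A last pedantic remark: choosing $b>0$ with $b<c_\alpha(s_0-\delta)$ presupposes $c_\alpha(s_0-\delta)>0$, which follows from monotonicity together with Corollary~\ref{cor:BPS-radius-r} (or one drops $G$ altogether when the capacity vanishes).
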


\begin{proof}
The proof is an adaption of the Hofer-Zehnder
method~\cite[Sec.~4.2]{1994-HZ-book}
to the case at hand. To emphasize this we
mainly keep their notation.
Fix $s_0$ as in the hypothesis of the theorem.
Then $S_0:=H^{-1}(s_0)$ is a
hypersurface\footnote{
     A hypersurface is a
     smooth submanifold of codimension $1$.
  }
in $T^*M$ by the inverse function theorem. It is
compact since $H$ is proper and it bounds the open set
$\dot B_0:=\{H<s_0\}$ since $H$ is bounded below.
Furthermore, by the implicit
function theorem and compactness of $S_0$ there is
a constant $\mu>0$ such that $s_0+\eps$ is a
regular value of $H$ and $S_\eps:=H^{-1}(s_0+\eps)$ is
diffeomorphic to $S_0$ whenever $\eps\in[-\mu,\mu]$.
Note that $S_\eps$ bounds the open set
$\dot B_\eps:=\{H<s_0+\eps\}$ which itself
contains the zero section $M$ of $T^*M$.
Furthermore, since $c_\alpha$ is Lipschitz continuous
at $s_0$ there is a constant $L>0$ such that
\begin{equation}\label{eq:c-L}
     c(\eps)-c(0)\le L\eps
     ,\qquad
     c(\eps):=c_\alpha(s_0+\eps),
\end{equation}
for every $\eps\in[-\mu,\mu]$; otherwise,
choose $\mu>0$ smaller. We proceed in 3 steps.

I. Pick $\tau\in(0,\mu)$. Then there is a Hamiltonian
$K\in C^\infty_0(S^1\times\dot B_0)$ whose
maximum over the zero section satisfies
$$
     -c(0)
     <m_0(K)
     \le -\left(c(0)-L\tau\right)
$$
and which does not admit any $1$-periodic orbit
representing $\alpha$. Indeed if no such $K$ exists, then
$\cBPS(\dot B_0,M;\alpha)\le c(0)-L\tau$
by definition~(\ref{eq:BPS-capacity}) of the BPS capacity.
But $c(0)=c_\alpha(s_0)=\cBPS(\dot B_0,M;\alpha)$
and we obtain the contradiction
$c(0)\le c(0)-L\tau$.
Now pick a smooth function
$f:\R\to [-3L\tau,0]$ such that
\begin{alignat*}{2}
     &f(s)=-3L\tau&\qquad\text{if}&\qquad s\le 0
   \\
    &f(s)=0&\qquad\text{if}&\qquad s\ge \tfrac{\tau}{2}
   \\
    &0<f^\prime(s)\le 7L&\qquad\text{if}
    &\qquad 0<s<\tfrac{\tau}{2}
\end{alignat*}
and consider the Hamiltonian
$F\in C^\infty_0(S^1\times \dot B_\tau)$ defined by
\begin{alignat*}{2}
    &F(x)=K(x)-3L\tau&\qquad\text{if}
    &\qquad x\in \dot B_0
   \\
    &F(x)=f(\eps)&\qquad\text{if}
    &\qquad x\in S_\eps=H^{-1}(s_0+\eps),\; 0\le\eps<\tau
   \\
    &F(x)=0&\qquad\text{if}
    &\qquad x\notin \dot B_\tau
\end{alignat*}
and illustrated by Figure~\ref{fig:fig-Hamiltonian-F}.
\begin{figure}%[h]
  \centering
  \includegraphics{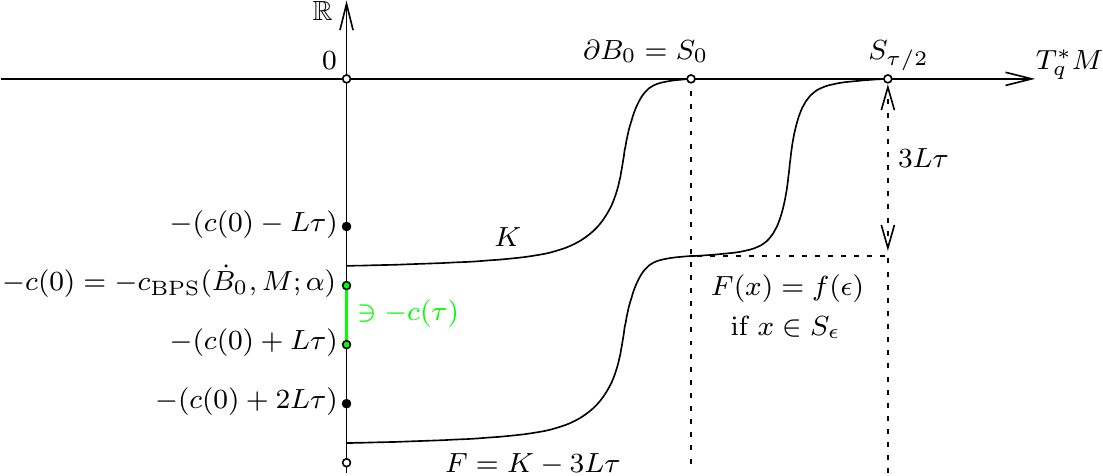}
  \caption{Hamiltonians $F\in\Hh_{c(\tau)}(\dot B_\tau)$
                 and $K$ with $\Pp_1(K;\alpha)=\emptyset$
                 }
  \label{fig:fig-Hamiltonian-F}
\end{figure}
By~(\ref{eq:c-L}) the Hamiltonian $F$
satisfies the estimate
$$
     m_0(F)
     =m_0(K)-3L\tau
     \le-(c(0)-L\tau)-3L\tau
     <-(c(0)+L\tau)
     \le-c(\tau).
$$
Since $m_0(F)\le-\cBPS(\dot B_\tau,M;\alpha)$
the definition~(\ref{eq:BPS-capacity}) of the
BPS capacity shows that the set $\Pp_1(F;\alpha)$
is not empty. In other words, there is a $1$-periodic
orbit $z$ of $X_F$ that represents $\alpha$.
Observe that $z$ cannot intersect $\dot B_0$: Due
to compact support the open set $\dot B_0$ is invariant
under the flow of $K$. But the flows of $K$ and
$K-3L\tau=F$ coincide. Thus, if $z$ intersects
$\dot B_0$, then it stays completely inside. But this
is impossible since $\Pp_1(K;\alpha)=\emptyset$.
\\
On the other hand, since $\alpha\not=0$ the orbit
$z$ of $X_F$ is non-constant and therefore it must
intersect the regions foliated by the hypersurfaces
$S_\eps$ where $0<\eps<\frac{\tau}{2}$.
But each of them is a level set of $F$, hence
invariant under the flow of $X_F$.
This shows that $z$ lies on $S_\eps$ for some
$0<\eps<\frac{\tau}{2}$.

II. Repeat the argument for each element of a sequence
$\tau_j\to 0$ to obtain sequences $F_j$ and $\eps_j$
and a sequence $z_j$ of $1$-periodic orbits of
$X_{F_j}$ that lie on $S_{\eps_j}$ and where $\eps_j\to 0$.
Next we interpret each $z_j$ as a $T_j$-periodic
orbit of $X_H$ by rescaling time. Most importantly, the
periods $T_j$ are uniformly bounded from above by $7L$.
To see this note that on the open set
$$
     U:=\bigcup_{\eps\in(-\mu,\mu)} S_\eps
     =\bigcup_{\eps\in(-\mu,\mu)} H^{-1}(s_0+\eps)
$$
the Hamiltonian $H$ is obviously given by
$H(x)=s_0+\eps$ whenever $x\in S_\eps$.
For each $\tau_j$ and each $\eps\in[0,\tau_j)$ we have
$$
     F_j(x)=f_j(H(x)-s_0)=f_j(\eps)
$$
for every $x\in S_\eps$. At such $x$ use the definition
of $X_{F_j}$ and the chain rule to get
$$
     \omega_0\left(X_{F_j},\cdot\right)
     =dF_j
     =f_j^\prime(H-s_0) dH
     =\omega_0\left(f_j^\prime(\eps) X_H,\cdot\right).
$$
Thus, because $z_j$ lies on $S_{\eps_j}$, it satisfies
the equation
$$
     \dot z_j(t)=X_{F_j}\circ z_j(t)
     =T_j\cdot X_H \circ z_j(t)
     ,\qquad
     T_j:=f_j^\prime(\eps_j),
$$
and the periodic boundary condition
$z_j(t+1)=z_j(t)$ for every $t\in\R$.

III. Uniform boundedness of the periods
$T_j$ is crucial in the following proof of existence of a
$1$-periodic orbit $z$ of $X_H$ which lies on the
original level hypersurface $S_0=H^{-1}(s_0)$ and
represents the given class $\alpha$. Indeed note that
$S_{\eps_j}\subset\{H\le s_0+\mu\}=:B_\mu$
and that $B_\mu$ is compact since $H$ is proper and
bounded below. In other words, the sequence of loops
$z_j$ is uniformly bounded in $C^0$.
Concerning $C^1$ we obtain the uniform estimate
$$
     \Abs{\dot z_j(t)}
     =\Abs{T_j}\cdot\Abs{X_H\circ z_j(t)}
     \le 7L \Norm{X_H}_{C^0(B_\mu)}
$$
for all $t\in S^1$ and $j\in\N$.
Therefore by the Arzel\`{a}-Ascoli theorem
there is a subsequence, still denoted by $z_j$,
which converges in $C^0$ and by using the equation
for $z_j$ even in $C^\infty$ to
a smooth $1$-periodic solution $z$ of
the equation $\dot z=T\cdot X_H(z)$
where $T=\lim_{j\to\infty} T_j$.
Since $\eps_j\to 0$ the orbit $z$
takes values on the desired level hypersurface
$S_0=H^{-1}(s_0)$.
To prove that $z=(x,y)$ represents the same class $\alpha$
as does each $z_j=(x_j,y_j)$ we need to show that
$[x]=[x_j]$
%observe that there are the obvious
%homotopies $(q,p)\sim (q,0)$ and $(q_j,0)\sim (q_j,p_j)$.
%Thus it remains to show that $x\sim x_j$
for some $j$.
To see this consider the injectivity radius $\iota>0$
of the compact Riemannian manifold $(M,g)$ and
pick $j$ sufficiently large such that the Riemannian
distance between $x(t)$ and $x_j(t)$ is less than $\iota/2$
for every $t\in S^1$. Setting $\exp_{x(t)}\xi(t)=x_j(t)$
provides the desired homotopy $h_\lambda(t)=\exp_{x(t)}\lambda\xi(t)$
between $h_0=x$ and $h_1=x_j$.
\\
Reparametrize time to obtain the $T$-periodic
solution $z_T(t):=z(t/T)$ of
$$
     \dot z_T(t)=\tfrac{1}{T} \dot z(t/T)
     =X_H\circ z(t/T)=X_H\circ z_T(t)
$$
which obviously represents the same class $\alpha$
as $z$. Since $\alpha\not=0$ the loop $z_T$ cannot be
constant and so the period necessarily satisfies $T>0$.
This concludes the proof of
Theorem~\ref{thm:carrries-orbit}.
\end{proof}

%\newpage
%\vspace{.2cm}
\noindent
{\sc First author.}
{\small
         Financial support:
FAPESP grant 2011/16265-8 and CNPq grant 303651/2010-5.
         Author address:
Rua do Mat\~ao 1010, Cidade Universit\'aria, S\~ao Paulo, SP, Brazil.
         Email:
psalomao@gmail.com
}

\vspace{.2cm}
\noindent
{\sc Second author.}
{\small
         Financial support:
            FAPESP grant 2011/01830-1.
         Author address:
            IMECC,
%                 UNICAMP,
%                 Instituto de Matem\'{a}tica, Estat\'{\i}stica
%                 e Computa\c{c}\~{a}o Scient\'{\i}fica,
%                 Universidade Estadual de Campinas
            Rua S\'{e}rgio Buarque de Holanda 651,
%                 Cidade Universit\'{a}ria "Zeferino Vaz",
%                 Distr. Bar\~{a}o Geraldo,
            13083-859 Campinas, SP, Brazil.
            Email: joa@math.sunysb.edu
}

%%%%%%%%%%%%%%%%%%%%%%%%%%%%%%%%%%%%%
%%%%%%%%%%%%%%%% References  %%%%%%%%%%%%
%%%%%%%%%%%%%%%%%%%%%%%%%%%%%%%%%%%%%

\bibliography{almex v2.bbl}{}
%\bibliography{BibDesk-Joa}{}
\bibliographystyle{plain}

\end{document}